\newcommand{\la}{\lambda}
\newcommand{\p}{\phantom}
\newcommand{\q}{\quad}
\newtheorem{thm}{Theorem}[section]
\theoremstyle{definition}
\theoremstyle{remark}
\title{Improving the Estimates for a Sequence \\ Involving Prime Numbers}
\author{Christian Axler}
\address{Institute of Mathematics\\ Heinrich-Heine University Düsseldorf\\
40225 Düsseldorf, Germany}
\email{christian.axler@hhu.de}
\date{\today}
\subjclass[2010]{Primary 11N05; Secondary 11A41}
\keywords{prime counting function, prime numbers, sum of primes}
\begin{document}

\begin{abstract}
Based on new explicit estimates for the prime counting function, we improve the currently known estimates for the particular sequence $C_n = np_n - \sum_{k \leq 
n}p_k$, $n \geq 1$, involving the prime numbers.
\end{abstract}

\maketitle

\section{Introduction}

Let $p_n$ denotes the $n$th prime number. In this paper, we establish new explicit estimates for the sequence $(C_n)_{n \geq 1}$ with
\begin{displaymath}
C_n = np_n - \sum_{k \leq n} p_k
\end{displaymath}
(see \cite{pol}). In \cite[Theorem 10]{axler2015}, the present author used the identity
\begin{equation}
C_n = \int_2^{p_n} \pi(x) \, dx, \tag{1} \label{1}
\end{equation}
where $\pi(x)$ denotes the number of primes not exceeding $x$, to derive that the asymptotic formula
\begin{equation}
C_n = \sum_{k=1}^{m-1} (k-1)! \left(1 - \frac{1}{2^k} \right) \frac{p_n^2}{\log^k p_n} + O \left( \frac{p_n^2}{\log^m p_n} \right). \tag{2} \label{2}
\end{equation}
holds for each positive integer $m$. By setting $m=9$ in \eqref{2}, we get
\begin{equation}
C_n = \frac{p_n^2}{2 \log p_n} + \frac{3p_n^2}{4\log^2 p_n} + \frac{7p_n^2}{4 \log^3 p_n} + \chi(n) + O \left( \frac{p_n^2}{\log^9 p_n} \right), \tag{3} 
\label{3}
\end{equation}
where
\begin{displaymath}
\chi(n) = \frac{45 p_n^2}{8\log^4 p_n} + \frac{93p_n^2}{4\log^5 p_n} + \frac{945p_n^2}{8 \log^6 p_n} + \frac{5715p_n^2}{8 \log^7 p_n} + \frac{80325p_n^2}{16 
\log^8 p_n}. 
\end{displaymath}
In the direction of \eqref{3}, the present author \cite[Theorem 3 and Theorem 4]{axler2015} showed that
\begin{equation}
C_n \geq \frac{p_n^2}{2 \log p_n} + \frac{3p_n^2}{4 \log^2 p_n} + \frac{7p_n^2}{4 \log^3 p_n} + \Theta(n) \tag{4} \label{4}
\end{equation}
for every $n \geq 52703656$, where
\begin{displaymath}
\Theta(n) = \frac{43.6p_n^2}{8\log^4 p_n} + \frac{90.9p_n^2}{4\log^5 p_n} + \frac{927.5p_n^2}{8\log^6 p_n} + \frac{5620.5p_n^2}{8\log^7 p_n} + 
\frac{79075.5p_n^2}{16\log^8 p_n}
\end{displaymath}
and that the upper bound
\begin{equation}
C_n \leq \frac{p_n^2}{2 \log p_n} + \frac{3p_n^2}{4 \log^2 p_n} + \frac{7p_n^2}{4 \log^3 p_n} + \Omega(n) \tag{5} \label{5}
\end{equation}
holds for every positive integer $n$, where
\begin{displaymath}
\Omega(n) = \frac{46.4p_n^2}{8\log^4 p_n} +  \frac{95.1p_n^2}{4\log^5 p_n} + \frac{962.5p_n^2}{8\log^6 p_n} + \frac{5809.5p_n^2}{8\log^7 p_n} + 
\frac{118848p_n^2}{16\log^8 p_n}.
\end{displaymath}
Using new explicit estimates for the prime counting function $\pi(x)$, which are found in \cite[Proposition 3.6 and Proposition 3.12]{axler2017}, we improve the 
inequalities \eqref{4} and \eqref{5} by showing the following both results.

\begin{thm} \label{thm1}
For every positive integer $n \geq 440200309$, we have
\begin{displaymath}
C_n \geq \frac{p_n^2}{2 \log p_n} + \frac{3p_n^2}{4 \log^2 p_n} + \frac{7p_n^2}{4 \log^3 p_n} + L(n),
\end{displaymath}
where
\begin{displaymath}
L(n) = \frac{44.4p_n^2}{8\log^4 p_n} + \frac{92.1p_n^2}{4\log^5 p_n} + \frac{937.5p_n^2}{8\log^6 p_n} + \frac{5674.5p_n^2}{8\log^7 p_n} + 
\frac{79789.5p_n^2}{16\log^8 p_n}.
\end{displaymath}
\end{thm}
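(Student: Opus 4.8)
The plan is to start from the integral identity \eqref{1} and insert the explicit lower bound for $\pi(x)$ furnished by \cite[Proposition 3.6]{axler2017}. That proposition supplies an inequality of the shape $\pi(x) \geq \frac{x}{\log x}\sum_{k=0}^{8}\frac{b_k}{\log^k x} =: g(x)$, valid for all $x$ above some effective threshold $x_0$, in which the coefficients $b_k$ sit slightly below the true asymptotic coefficients $k!$ so as to produce a genuine lower bound. Since $p_n \geq x_0$ once $n$ is large enough, I would split
\begin{displaymath}
C_n = \int_2^{x_0}\pi(x)\,dx + \int_{x_0}^{p_n}\pi(x)\,dx,
\end{displaymath}
discard the nonnegative first integral (it only helps the inequality), and bound the second from below by integrating $g$ monomial by monomial.

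The computational engine is the integration-by-parts recursion
\begin{displaymath}
\int \frac{x}{\log^k x}\,dx = \frac{x^2}{2\log^k x} + \frac{k}{2}\int \frac{x}{\log^{k+1}x}\,dx,
\end{displaymath}
which I would apply repeatedly to each term of $g$, retaining every boundary contribution $\frac{p_n^2}{2\log^k p_n}$ and $-\frac{x_0^2}{2\log^k x_0}$ and expanding the positively-weighted leftover integral further, until the exponent of $\log$ reaches $9$. At that final stage the remaining integral $\int_{x_0}^{p_n}\frac{x}{\log^9 x}\,dx$ is positive and may simply be dropped, so no genuine tail estimate is required. Collecting the accumulated powers of $\log p_n$ reproduces the three leading terms $\frac{p_n^2}{2\log p_n}+\frac{3p_n^2}{4\log^2 p_n}+\frac{7p_n^2}{4\log^3 p_n}$ with exactly the coefficients of \eqref{2}, yields a correction term of the same form as $L(n)$ for the powers $\log^4 p_n$ through $\log^8 p_n$, and leaves behind a fixed positive constant $K$ assembled from the evaluations $\frac{x_0^2}{2\log^k x_0}$. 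Each correction coefficient emerges slightly below the corresponding exact coefficient of \eqref{3}, precisely because the $b_k$ were reduced.

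It then remains to show that this lower bound dominates $\frac{p_n^2}{2\log p_n}+\frac{3p_n^2}{4\log^2 p_n}+\frac{7p_n^2}{4\log^3 p_n}+L(n)$ for every $n\geq 440200309$. Writing $c_k$ for the integrated correction coefficients and $\ell_k$ for those declared in $L(n)$ (the values $44.4,92.1,937.5,5674.5,79789.5$ are chosen just below the outcome of the integration), the difference between the two sides equals $\sum_{k=4}^{8}\frac{(c_k-\ell_k)p_n^2}{\log^k p_n} - K$. The threshold $n=440200309$ is essentially inherited from the range of validity of \cite[Proposition 3.6]{axler2017}; once $p_n$ is that large, the positive excess of order $p_n^2/\log^4 p_n$ dwarfs the constant $K$, so the remaining work reduces to a boundary check at $n=440200309$ together with monotonicity of the excess in $p_n$.

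I expect the genuine difficulty to lie in the careful bookkeeping that pins down each integrated coefficient $c_k$ from the tangle of contributions fed forward by the recursion, and confirms $c_k\geq \ell_k$ across all five correction terms. This is most delicate for the $\log^8 p_n$ coefficient, where contributions from every lower monomial of the $\pi$-bound accumulate and the margin against $L(n)$ is thinnest; there a single misaccounted factor of $k/2$ would corrupt the final inequality.
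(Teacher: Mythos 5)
Your skeleton---insert an explicit lower bound for $\pi$ into \eqref{1} and integrate by parts monomial by monomial---is the same engine the paper runs (there it is packaged as inequality \eqref{12}, quoted from earlier work), but one step of your plan is fatal: you discard $\int_2^{x_0}\pi(x)\,dx$ as a merely ``nonnegative'' term that only helps. That integral is not a convenience; it is the whole proof. With the lower bound actually available (this is \cite[Proposition 3.12]{axler2017}; Proposition 3.6 is the \emph{upper} bound, used for Theorem \ref{thm2}), the admissible threshold is $x_0 = 19027490297 = p_{841160647}$, and the negative boundary contributions at $x_0$ that your recursion leaves behind amount to about $8.188\cdot 10^{18}$, as in \eqref{15}. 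The paper survives this only because it keeps the discarded integral, which it evaluates exactly by computer as $C_{841160647} = 8188378036394419009$; the surplus $d_0$ is a razor-thin $1.12\cdot 10^{13}$, a relative margin of order $10^{-6}$. Dropping the initial integral therefore loses the inequality by essentially $8\cdot 10^{18}$, and no amount of bookkeeping in the $\log$-power terms recovers it.

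Your fallback---that an excess $\sum_{k}(c_k-\ell_k)p_n^2/\log^k p_n$ in the integrated coefficients dwarfs the leftover constant $K$---cannot be repaired, for two reasons. First, its premise is false: the coefficients of $L(n)$ are not chosen ``just below the outcome of the integration''; they \emph{are} the outcome of the integration. With $a_4=5.85$, $a_5=23.85$, $a_6=119.25$, $a_7=715.5$, $a_8=5008.5$, $a_9=0$ from the proposition, formula \eqref{13} gives, e.g., for the $\log^{-4}$ term $\tfrac{3!}{2^4}\bigl(1+2\bigl(1+2+\tfrac{4\cdot 5.85}{6}\bigr)\bigr)=\tfrac{44.4}{8}$, exactly the first coefficient of $L(n)$, and likewise for the others. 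So $c_k=\ell_k$, your excess is identically zero, and your lower bound reads $L(n)+\dots-K$ with $K>0$: strictly weaker than the theorem. Second, even if one weakened $L(n)$ to manufacture excess, near the threshold $p_n\approx x_0\approx 1.9\cdot 10^{10}$ one has $p_n^2/\log^4 p_n\approx 1.2\cdot 10^{15}$, so any plausible excess coefficient (well below $1$) produces at most $\sim 10^{14}$, four to five orders of magnitude short of $K\approx 8.2\cdot 10^{18}$; your proposed ``boundary check plus monotonicity'' endgame cannot close that gap. Finally, the constant $440200309$ is not inherited from the range of validity of the $\pi$-bound: the analytic argument only reaches $n\geq 841160647$ (where $p_n\geq x_0$), and the paper closes the window $440200309\leq n\leq 841160646$ by direct computer verification, a step for which your plan has no substitute.
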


\begin{thm} \label{thm2}
For every positive integer $n$, we have
\begin{displaymath}
C_n \leq \frac{p_n^2}{2 \log p_n} + \frac{3p_n^2}{4 \log^2 p_n} + \frac{7p_n^2}{4 \log^3 p_n} + U(n),
\end{displaymath}
where
\begin{displaymath}
U(n) = \frac{45.6p_n^2}{8\log^4 p_n} +  \frac{93.9p_n^2}{4\log^5 p_n} + \frac{952.5p_n^2}{8\log^6 p_n} + \frac{5755.5p_n^2}{8\log^7 p_n} + 
\frac{116371p_n^2}{16\log^8 p_n}.
\end{displaymath}
\end{thm}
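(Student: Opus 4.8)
The plan is to start from the integral identity \eqref{1}, namely $C_n = \int_2^{p_n}\pi(x)\,dx$, and to substitute into it the explicit upper bound for $\pi(x)$ furnished by \cite[Proposition 3.12]{axler2017}, which has the shape $\pi(x) \le \frac{x}{\log x}\bigl(1 + \frac{1}{\log x} + \cdots + \frac{a_7}{\log^7 x}\bigr)$ and is valid for every $x \ge x_0$ with an explicit constant $x_0$. Writing $p = p_n$, I would split $\int_2^{p} = \int_2^{x_0} + \int_{x_0}^{p}$, estimate the finite first integral by an absolute constant, and bound the second using the displayed estimate. The task then reduces to integrating the elementary functions $x/\log^{k+1}x$ for $0 \le k \le 7$, since the term $\frac{a_7}{\log^7 x}$ in the bracket is exactly the last one contributing to the $p^2/\log^8 p$ coefficient of $C_n$.

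For each such term I would iterate the integration-by-parts recursion $\int \frac{x}{\log^m x}\,dx = \frac{x^2}{2\log^m x} + \frac{m}{2}\int\frac{x}{\log^{m+1}x}\,dx$, which peels off a finite string of terms of the form $\frac{x^2}{\log^j x}$ and leaves a remainder integral of strictly higher order. Collecting the contributions of all seven terms and matching powers of $\log p$, the leading coefficients reproduce exactly the values $\tfrac12, \tfrac34, \tfrac74, \tfrac{45}{8}, \tfrac{93}{4}, \tfrac{945}{8}, \tfrac{5715}{8}, \tfrac{80325}{16}$ of the asymptotic expansion \eqref{2}; the essential point is that the inflated coefficients $a_k$ of Proposition 3.12 produce output coefficients slightly larger than these, and the surplus is precisely what is recorded in the decimals $45.6, 93.9, 952.5, 5755.5$ of $U(n)$.

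The technical heart is controlling everything of order $p^2/\log^9 p$ and smaller: the tails of the iterated integrals, the top contribution $\int_{x_0}^p \frac{x}{\log^8 x}\,dx$, and the fixed constant $\int_2^{x_0}\pi(x)\,dx$. For this I would establish an auxiliary bound of the type $\int_{x_0}^{x}\frac{t}{\log^{m}t}\,dt \le \lambda\,\frac{x^2}{\log^{m}x}$ for $x \ge x_0$ with $\lambda$ slightly above $\tfrac12$, proved by differentiating and checking the inequality once at $x = x_0$; this converts every remainder into a clean multiple of $p^2/\log^8 p$ or smaller. The generous coefficient $116371/16$ of the $p^2/\log^8 p$ term in $U(n)$ is chosen exactly so as to dominate the sum of all these leftovers for $x \ge x_0$, and verifying this—that is, confirming a single explicit inequality in the variable $\log p$ on $[\log x_0,\infty)$ never fails—is the step I expect to be the main obstacle, since it is where the precise decimal constants must be pinned down.

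Finally, since the claim is asserted for \emph{every} positive integer $n$, the range $p_n < x_0$ must be treated separately. Here I would either invoke a weaker upper bound for $\pi(x)$ valid on the whole short range together with the same integration scheme, or verify the inequality by direct computation; in this regime $\log p_n$ is small, so the padded $p_n^2/\log^8 p_n$ term in $U(n)$ is comparatively large and leaves ample room, which is what makes the finite verification go through and thereby removes the lower threshold that is unavoidable in Theorem \ref{thm1}.
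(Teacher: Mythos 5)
Your proposal follows essentially the same route as the paper: the paper likewise starts from the identity \eqref{1}, inserts an explicit upper bound for $\pi(x)$ from \cite{axler2017} (Proposition 3.6 there, not Proposition 3.12, which is the \emph{lower} bound used for Theorem \ref{thm1}), obtains the main terms and the inflated coefficients $45.6$, $93.9$, $952.5$, $5755.5$ from the integration-by-parts machinery --- which it cites as \cite[Theorem 14]{axler2015} rather than re-deriving inline as you do --- absorbs the constant from the initial segment (there $d_1 \leq 453$) together with the remainder terms into the generous $116371\, p_n^2/(16 \log^8 p_n)$ coefficient, and settles the remaining small $n$ by a computer check. Apart from your inlining of the cited machinery and the swapped proposition number, the two arguments coincide.
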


\section{Preliminaries} \label{sec2}

In 1793, Gau{\ss} \cite{gauss} stated a conjecture concerning an asymptotic magnitude of $\pi(x)$, namely
\begin{equation}
\pi(x) \sim \text{li}(x) \q\q (x \to \infty), \tag{6} \label{6}
\end{equation}
where the \emph{logarithmic integral} $\text{li}(x)$ defined for every real $x \geq 0$ as
\begin{equation}
\text{li}(x) = \int_0^x \frac{dt}{\log t} = \lim_{\varepsilon \to 0} \left \{ \int_{0}^{1-\varepsilon}{\frac{dt}{\log t}} + 
\int_{1+\varepsilon}^{x}{\frac{dt}{\log t}} \right \} \approx \int_2^x \frac{dt}{\log t} + 1.04516 \ldots. \tag{7} \label{7}
\end{equation}
Using the method of integration of parts, \eqref{7} implies that
\begin{equation}
\text{li}(x) = \frac{x}{\log x} + \frac{x}{\log^2 x} + \frac{2x}{\log^3 x} + \frac{6x}{\log^4 x} + \frac{24x}{\log^5 x} + \ldots + \frac{(m-1)! x}{\log^mx}+ O 
\left( \frac{x}{\log^{m+1} x} \right) \tag{8} \label{8}
\end{equation}
for every positive integer $m$. The asymptotic formula \eqref{6} was proved independently by Hadamard \cite{hadamard1896} and by de la Vall\'{e}e-Poussin 
\cite{vallee1896} in 1896, and is known as the \textit{Prime Number Theorem}. By proving the existence of a zero-free region for the Riemann zeta-function 
$\zeta(s)$ to the left of the line $\text{Re}(s) = 1$, de la Vall\'{e}e-Poussin \cite{vallee1899} was able to estimate the error term in the Prime Number 
Theorem by
\begin{displaymath}
\pi(x) = \text{li}(x) + O(x \exp(-a\sqrt{\log x})),
\end{displaymath} 
where $a$ is a positive absolute constant. Together with \eqref{8}, we obtain that the asymptotic formula \begin{equation}
\pi(x) = \frac{x}{\log x} + \frac{x}{\log^2 x} + \frac{2x}{\log^3 x} + \frac{6x}{\log^4 x} + \frac{24x}{\log^5 x} + \ldots + \frac{(m-1)! x}{\log^mx}+ O \left( 
\frac{x}{\log^{m+1} x} \right). \tag{9} \label{9}
\end{equation}
holds for every positive integer $m$.

\section{A proof of Theorem \ref{thm1}} \label{sec3}

Now, we use some recent obtained lower bound for the prime counting function $\pi(x)$ to give a proof of Theorem \ref{thm1}.

\begin{proof}[Proof of Theorem \ref{thm1}]
First, let $m$ be a positive integer with $m \geq 2$, and let $a_2, \ldots, a_m$, $x_0$, and $y_0$ be real numbers, so that
\begin{equation}
\pi(x) \geq \frac{x}{\log x} + \sum_{k=2}^m \frac{a_kx}{\log^k x} \tag{10} \label{10}
\end{equation}
for every $x \geq x_0$ and
\begin{equation}
\text{li}(x) \geq \sum_{j=1}^{m-1} \frac{(j-1)! x}{\log^j x} \tag{11} \label{11}
\end{equation}
for every $x \geq y_0$. The asymptotic formulae \eqref{9} and \eqref{8} guarantee the existence of such parameters. In \cite[Theorem 13]{axler2015}, the present 
author showed that
\begin{equation}
C_n \geq d_0 + \sum_{k=1}^{m-1} \left( \frac{(k-1)!}{2^k} ( 1 + 2t_{k-1,1} ) \right) \frac{p_n^2}{\log^k p_n} \tag{12} \label{12}
\end{equation}
for every $n \geq \max \{ \pi(x_0) + 1, \pi(\sqrt{y_0}) + 1 \}$, where $t_{i,j}$ is defined by
\begin{equation}
t_{i,j} = (j-1)! \sum_{l=j}^{i} \frac{2^{l-j}a_{l+1}}{l!}. \tag{13} \label{13}
\end{equation}
and $d_0$ is given by
\begin{displaymath}
d_0 = d_0(m,a_2,\ldots, a_m, x_0) = \int_2^{x_0} \pi(x) \, dx - ( 1 + 2 t_{m-1,1} )\, \text{li}(x_0^2) + \sum_{k=1}^{m-1} t_{m-1,k} \frac{x_0^2}{\log^k
x_0}.
\end{displaymath}
Now, we choose $m=9$, $a_2=1$, $a_3 = 2$, $a_4=5.85$, $a_5=23.85$, $a_6 = 119.25$, $a_7 = 715.5$, $a_8 = 5008.5$, $a_9 = 0$, $x_0 = 19027490297$ and $y_0 = 
4171$. By \cite[Proposition 3.12]{axler2017}, we obtain that the inequality \eqref{10} holds for every $x \geq x_0$ and \eqref{11} holds for every $x \geq y_0$ 
by \cite[Lemma 15]{axler2015}. Substituting these values in \eqref{12}, we get
\begin{displaymath}
C_n \geq d_0 + \frac{p_n^2}{2 \log p_n} + \frac{3p_n^2}{4 \log^2 p_n} + \frac{7p_n^2}{4 \log^3 p_n} + L(n)
\end{displaymath}
for every $n \geq 841160647 = \pi(x_0)$, where $d_0 = d_0(9,1,2,5.85,23.85,119.25,715.5,5008.5,0, x_0)$ is given by
\begin{align}
d_0 & = \int_2^{x_0} \pi(x) \, dx - 253.3 \, \text{li}(x_0^2) + \frac{126.15 x_0^2}{\log x_0} + \frac{62.575 x_0^2}{\log^2 x_0} + \frac{61.575 x_0^2}{\log^3 
x_0} \nonumber \\
& \p{\q\q} + \frac{89.4375x_0^2}{\log^4 x_0} + \frac{165.95x_0^2}{\log^5 x_0} + \frac{357.75x_0^2}{\log^6 x_0} + \frac{715.5x_0^2}{\log^7 x_0}. \tag{14} 
\label{14}
\end{align}
The present author \cite[Lemma 16]{axler2015} found that
\begin{displaymath}
\text{li}(x) \leq \frac{x}{\log x} + \frac{x}{\log^2 x} + \frac{2x}{\log^3 x} + \frac{6x}{\log^4 x} + \frac{24x}{\log^5 x} + \frac{120x}{\log^6 x} + 
\frac{900x}{\log^7 x}
\end{displaymath}
for every $x \geq 10^{16}$. Applying this inequality to \eqref{14}, we get
\begin{align*}
d_0 & \geq \int_2^{x_0} \pi(x) \, dx - \frac{x_0^2}{2 \log x_0} - \frac{3 x_0^2}{4 \log^2 x_0} - \frac{7 x_0^2}{4 \log^3 x_0} - \frac{5.55x_0^2}{\log^4 x_0} - 
\frac{23.025 x_0^2}{ \log^5 x_0} \\
& \p{\q\q} - \frac{117.1875x_0^2}{\log^6 x_0} - \frac{1065.515625x_0^2}{\log^7 x_0}.
\end{align*}
Computing the right-hand side of the last inequality, we get
\begin{equation}
d_0 \geq \int_2^{x_0} \pi(x) \, dx - 8.188366 \cdot 10^{18}. \tag{15} \label{15}
\end{equation}
Since $x_0 = p_{841160647}$, we use \eqref{1} and a computer to obtain
\begin{displaymath}
\int_2^{x_0} \pi(x) \, dx = C_{841160647} = 8188378036394419009.
\end{displaymath}
Hence, by \eqref{15}, we get $d_0 \geq 1.12 \cdot 10^{13} > 0$. So we obtain the desired inequality for every $n \geq 841160647$. For every $440200309 \leq n 
\leq 841160646$ we check the inequality with a computer.
\end{proof}

\section{A proof of Theorem \ref{thm2}} \label{sec4}

Next, we use a recent result concerning an upper bound for the prime counting function $\pi(x)$ to establish the required inequality stated in Theorem 
\ref{thm2}.

\begin{proof}[Proof of Theorem \ref{thm2}]
Let $m$ be a positive integer with $m \geq 2$, let $a_2, \ldots, a_m, x_1$ be real numbers so that
\begin{equation}
\pi(x) \leq \frac{x}{\log x} + \sum_{k=2}^m \frac{a_kx }{\log^k x} \tag{16} \label{16}
\end{equation}
for every $x \geq x_1$ and let $\la, y_1$ be real numbers so that
\begin{equation}
\text{li}(x) \leq \sum_{j=1}^{m-2} \frac{(j-1)! x}{\log^j x} + \frac{\la x}{\log^{m-1} x} \tag{17} \label{17}
\end{equation}
for every $x \geq y_1$. Again, the asymptotic formulae \eqref{9} and \eqref{8} guarantee the existence of such parameters. The present author \cite[Theorem 
14]{axler2015} found that the inequality
\begin{align}
C_n & \leq d_1 + \sum_{k=1}^{m-2} \left( \frac{(k-1)!}{2^k} ( 1 + 2t_{k-1,1} ) \right) \frac{p_n^2}{\log^k p_n} \nonumber \\
& \phantom{\quad\quad} + \left( \frac{(1 + 2t_{m-1,1})\la}{2^{m-1}} - \frac{a_m}{m-1} \right) \frac{p_n^2}{\log^{m-1} p_n} \tag{18} \label{18}
\end{align}
holds for every $n \geq \max \{ \pi(x_1) + 1, \pi(\sqrt{y_1}) + 1 \}$, where $t_{i,j}$ is defined by \eqref{13}, and 
\begin{displaymath}
d_1 = d_1(m,a_2, \ldots, a_m, x_1) =  \int_2^{x_1} \pi(x) \, dx - ( 1 + 2 t_{m-1,1} ) \, \text{li}(x_1^2) + \sum_{k=1}^{m-1} t_{m-1,k} \frac{x_1^2}{\log^k x_1}.
\end{displaymath}
Next, we choose $m = 9$, $a_2=1$, $a_3=2$, $a_4=6.15$, $a_5=24.15$, $a_6=120.75$, $a_7=724.5$,$a_8=6601$, $a_9 = 0$, $\lambda=6300$, $x_1 = 13$ and $y_1 = 
10^{18}$. By \cite[Proposition 3.6]{axler2017}, we get that the inequality \eqref{16} holds for every $x \geq x_1$ and by \cite[Lemma 19]{axler2015}, that 
\eqref{17} holds for every $y \geq y_1$. By substituting these values \eqref{18}, we get
\begin{equation}
C_n \leq d_1 + \frac{p_n^2}{2 \log p_n} + \frac{3p_n^2}{4 \log^2 p_n} + \frac{7p_n^2}{4 \log^3 p_n} +  U(n) - \frac{0.375p_n^2}{16 \log^8 p_n} \tag{19} 
\label{19}
\end{equation}
for every $n \geq 50847535$, where $d_1 = d_1(9, 1, 2, 6.15, 24.15, 120.75, 724.5, 6601, 0, x_1)$ is given by
\begin{align*}
d_1 & = \int_2^{x_1} \pi(x) \, dx - \frac{26599}{90} \; \text{li}(x_1^2) + \frac{26509 x_1^2}{180 \log x_1} + \frac{26329 x_1^2}{360 \log^2 x_1} + \frac{25969 
x_1^2}{360 \log^3 x_1} \\
& \p{\q\q} + \frac{25231 x_1^2}{240 \log^4 x_1} + \frac{11891 x_1^2}{60 \log^5 x_1} + \frac{5221 x_1^2}{12 \log^6 x_1} + \frac{943 x_1^2}{\log^7 x_1}.
\end{align*}
A computation shows that $d_1 \leq 453$. We define
\begin{displaymath}
f(x) = \frac{0.375x^2}{16\log^8 x} - 453.
\end{displaymath}
Since $f(9187322) > 0$ and $f'(x) \geq 0$ for every $x \geq e^4$, we get $f(p_n) \geq 0$ for every $n \geq \pi(9187322) + 1 = 614124$. Now we can use \eqref{19} 
to obtain the desired inequality for every positive integer $n \geq 50847535$. Finally, we check the remaining cases with a computer.
\end{proof}

\end{document}